\documentclass[12pt,A4,reqno]{amsart}
\usepackage{amsfonts}
\usepackage{mathrsfs}

\usepackage{amssymb}
\usepackage{amsmath,amscd}
\usepackage{color}
\usepackage{epsf}
\usepackage{graphicx}
\usepackage{xypic}

\theoremstyle{plain}
\newtheorem{thm}{Theorem}[section]

\newtheorem{prop}[thm]{Proposition}


\setlength{\oddsidemargin}{0.5cm} \setlength{\evensidemargin}{0.5cm}
\setlength{\textheight}{20cm} \setlength{\textwidth}{14.5cm}

\theoremstyle{definition}
\newtheorem{defi}[thm]{Definition}
\newtheorem{rem}[thm]{Remark}

\DeclareMathOperator{\link}{link}


\newcommand{\R}{\mathbb R}
\newcommand{\Z}{\mathbb Z}

\newcommand{\nn}{\vskip 0.2cm}
\newcommand{\n}{\vskip 0.1cm}

\begin{document}

\title [\ ] {On descriptions of products of simplices}

\author[L.~Yu]{Li Yu}
\address{Department of Mathematics and IMS, Nanjing University, Nanjing, 210093, P.R.China}
 \email{yuli@nju.edu.cn}

\author[M.~Masuda]{Mikiya Masuda}
\address{Department of Mathematics, Osaka City University, Sugimoto, Sumiyoshi-ku, 
        Osaka 558-8585, Japan}
\email{masuda@sci.osaka-cu.ac.jp}

\thanks{2010 \textit{Mathematics Subject Classification}.
  52B11, 52B70, 53C21, 53C23, 57S17.\\
  This work is partially supported by the
 NSFC (Grant No.11371188) and
 the PAPD (priority academic program development) of Jiangsu higher education institutions.}


\keywords{Convex polytope, product of simplices, moment-angle complex}

 \begin{abstract}
   We give several new criteria to judge whether a simple convex polytope in a Euclidean space
    is combinatorially 
   equivalent to a product of simplices. These criteria are mixtures of
    combinatorial, geometrical and topological conditions that are inspired by
    the ideas from toric topology.
  \end{abstract}

\maketitle

  \section{Background}
      A convex polytope $P$ 
       is the convex hull of a finite set of points
      in a Euclidean space $\R^d$. The dimension of $P$ is the dimension of the
      affine hull of these points. 
      Any codimension-one face of $P$ is called a \emph{facet} of $P$.
      We call an $n$-dimensional convex polytope $P$ \emph{simple} if each vertex of
      $P$ is the intersection of exactly $n$ different facets of $P$.
       Two convex polytopes are \emph{combinatorially equivalent}
       if their face lattices are isomorphic.
       Topologically, combinatorial equivalence
       corresponds to the existence of a (piecewise linear)
        homeomorphism between the two polytopes that
     restricts to homeomorphisms between their facets, and hence all their faces 
          (see~\cite[Chapter 2.2]{Ziegler95}).\nn

       In this paper, we will give several new criteria to judge whether a simple 
     convex polytope is combinatorially 
     equivalent to product of simplices 
    (Theorem~\ref{Thm:New-Comb} and Theorem~\ref{thm:Polytope-product-1}). 
     Some of these criteria are purely combinatorial, while
     others are phrased in geometrical or topological terms. 
     These criteria are mainly inspired by the ideas from  
     toric topology. So in the following we first explain
     some basic constructions and facts 
     in toric topology that are relevant to our discussion.\nn

  An \emph{abstract simplicial complex} on a set $[m]=\{ v_1,\cdots, v_m\}$ 
  is a collection $K$ of subsets $\sigma\subseteq [m]$ such that
  if $\sigma\in K$, then any subset of $\sigma$ also belongs to $K$.
  We always assume that the empty set belongs to $K$ and refer
  to $\sigma\in K$ as a \emph{simplex} of $K$.
  In particular, one-element simplices are called \emph{vertices} of of $K$. 
  If $K$ contains all one-element subsets of $[m]$, then we say that $K$
  is a simplicial complex \emph{on the vertex set} $[m]$.
  To avoid ambiguity in our argument, 
  we also use $V(K)$ and $V(\sigma)$ to refer to the vertex sets of $K$ and 
  any simplex $\sigma$ in $K$. 
 \n
   
    Any abstract simplicial complex $K$ admits a
  \emph{geometric realization} in some Euclidean space. 
  But sometimes we also use $K$ to denote its geometric realization 
  when the meaning is clear in the context.\n

  Given a finite abstract simplicial complex $K$ on a set $[m]$ and 
  a pair of spaces $(X,A)$ with $A\subset X$, we can construct a topological space 
  $(X,A)^{K}$ by:
    \begin{equation}\label{Equ:Construction}
     (X,A)^{K}= \bigcup_{\sigma\in K} (X,A)^{\sigma}, \ \text{where}\
      (X,A)^{\sigma}= \prod_{v_j\in \sigma} X \times \prod_{v_j\notin \sigma} A. 
    \end{equation}  
  Here $\prod$ means Cartesian product.  
  So $(X,A)^{K}$ is a subspace of the Cartesian product of
  $m$ copies of $X$. It is called the \emph{polyhedral product}
   or the \emph{generalized moment-angle complex}
   of $K$ and $(X,A)$. 
  In particular, $\mathcal{Z}_{K}
    =(D^2,S^1)^{K}$ and $\R\mathcal{Z}_{K}
    =(D^1,S^0)^{K}$ are called the 
  \emph{moment-angle complex} and the \emph{real moment-angle complex} of $K$,
   respectively (see~\cite[Section 4.1]{BP15}). The natural  
   actions of $(\Z_2)^m$ on $(D^1)^m$ and $(S^1)^m$ on $(D^2)^m$
    induce \emph{canonical actions} of $(\Z_2)^m$ on $\R\mathcal{Z}_{K}$
   and $(S^1)^m$ on $\mathcal{Z}_{K}$, respectively. \n
   
  When $K$ is the boundary of the dual of a simple convex polytope $P$, 
  the $\mathcal{Z}_{K}$ and $\R\mathcal{Z}_{K}$ are closed manifolds, also denoted by
  $\mathcal{Z}_P$ and $\R\mathcal{Z}_{P}$ respectively. In this case, 
  $\mathcal{Z}_P$ and $\R\mathcal{Z}_{P}$ are called the \emph{moment-angle manifold} and 
  the \emph{real moment-angle manifold} of $P$, respectively (see~\cite[Section 6.1]{BP02}). 
  These manifolds
    can be constructed in another way as described below 
  (see~\cite[Construction 4.1]{DaJan91}).\n
  
   Let $P^n$ be an $n$-dimensional simple convex polytope.
   Let $\mathcal{F}(P^n) =\{F_1,\cdots, F_m \}$ 
   be the set of facets of $P^n$.
  Let $\{e_1,\cdots, e_m\}$ be a basis of $(\Z_2)^m$ and define a map 
   $\lambda : \mathcal{F}(P^n) \rightarrow
    (\Z_2)^m$ by $\lambda(F_i)=e_i$. Then we can construct a space
      \begin{equation} \label{Equ:glue-back}
       M(P^n,\lambda) := P^n\times (\Z_2)^m \slash \sim
     \end{equation}
      where $(p,g) \sim (p',g')$ if and only if $p=p'$ and $g^{-1}g' \in G^{\lambda}_p$
   where $G^{\lambda}_p$ is the subgroup of $(\Z_2)^m$ generated by
  the set $\{ \lambda(F_i) \, |\, p\in F_i \}$. Let $\pi_{\lambda}: 
   M(P^n,\lambda) \rightarrow P^n$ be the quotient map. 
  One can show that $\R \mathcal{Z}_{P^n}$ is homeomorphic to $M(P^n,\lambda)$ 
  and the canonical action of $(\Z_2)^m$ on $\R \mathcal{Z}_{P^n}$ can be written
  on $M(P^n,\lambda)$ as:
   \begin{equation} \label{Equ:Canon-Action}
    g' \cdot [(p,g)] = [(p,g'+g)], \ p\in P^n,\ g,g'\in (\Z_2)^m. 
   \end{equation} 
     
    The moment-angle manifold $\mathcal{Z}_{P^n}$ can be
    similarly constructed from $P^n$ and a map $\Lambda: \mathcal{F}(P^n) \rightarrow
    \Z^m$ where $\{\Lambda(F_1),\cdots,\Lambda(F_m)\}$ is a unimodular basis of $\Z^m$.
    Indeed, if we indentify the torus $(S^1)^m =\R^m\slash \Z^m$, then we have
    \begin{equation} \label{Equ:glue-back-complex}
       \mathcal{Z}_{P^n} \cong P^n\times (S^1)^m \slash \sim
     \end{equation} 
     where $(p,g) \sim (p',g')$ if and only if $p=p'$ and $g^{-1}g' \in T^{\lambda}_p$
   where $T^{\lambda}_p$ is the subtorus of $(S^1)^m$ determined by
   the linear subspace of $\R^m$ spanned by the set $\{ \Lambda(F_i) \, |\, p\in F_i \}$.
   
    In addition, $\R \mathcal{Z}_{P^n}$ and 
    $\mathcal{Z}_{P^n}$ are smooth manifolds.
     In fact, there exists an equivariant smooth structure on 
     $\R \mathcal{Z}_{P^n}$ 
     (or $\mathcal{Z}_{P^n}$) with respect to the canonical $(\Z_2)^m$-action 
     (or $(S^1)^m$-action).
      The reader is referred 
     to~\cite[Ch.6]{BP02} or~\cite[Ch.6]{BP15} for
     the discussion of smooth structures on (real) moment-angle manifolds.
     Moreover, for any proper face $f$ of $P^n$, 
     $\pi^{-1}_{\lambda}(f)$ is an embedded
      closed smooth submanifold of $\R \mathcal{Z}_{P^n}$ 
      which is the fixed point set of the subgroup of $(\Z_2)^m$ generated by
     $\{ \lambda(F_i) \, |\, f\in F_i \}$ under the canonical $(\Z_2)^m$-action. 
        
    \vskip .6cm
  
    \section{Descriptions of products of simplices}
    
     For any $k\in\mathbb{N}$, let 
   $\Delta^k$ denote the standard $k$-dimensional simplex, which is
    $$\Delta^k = \{ (x_1,\cdots, x_k, x_{k+1})\in \R^{k+1}\,|\, x_1+\cdots +x_{k+1} = 1,
     x_1,\cdots, x_{k+1}\geq 0  \}.$$
      For any $n_1,\cdots, n_q\in \mathbb{N}$, consider 
       $\Delta^{n_1}\times\cdots\times\Delta^{n_q}$ as a product of 
       $\Delta^{n_1},\cdots, \Delta^{n_q}$
      in the Cartesian product $\R^{n_1+1}\times\cdots\times \R^{n_q+1}$.\n   
    
    Next, we first list some descriptions of
    products of simplices that appeared in Wiemeler's paper~\cite{Wiem15}.\n

      \begin{thm}[Wiemeler~\cite{Wiem15}] \label{thm:Wiem}
      Let $P^n$ be an $n$-dimensional simple convex polytope with $m$ facets, $n\geq 3$.     
      Then the following statements are equivalent.
    \begin{itemize}
       
      \item[(\textbf{a})] $P^n$ is combinatorially equivalent to a product of simplices.\n

      \item[(\textbf{b})] Any $2$-dimensional face of $P^n$ is either a $3$-gon or a
               $4$-gon.\n
                             
      \item[(\textbf{c})] There exists a quasitoric manifold $M^{2n}$ over $P^n$ which admits 
         a nonnegatively curved Riemannian metric 
         that is invariant under the canonical $(S^1)^n$-action 
           on $M^{2n}$.
   \end{itemize}        
    \end{thm}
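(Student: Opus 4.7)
The plan is to close the loop (a)$\Rightarrow$(b)$\Rightarrow$(a) and (a)$\Rightarrow$(c)$\Rightarrow$(b), treating (a)$\Leftrightarrow$(b) purely combinatorially and reserving the Riemannian input for (c)$\Rightarrow$(b).

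For the two easy directions: (a)$\Rightarrow$(b) follows because every face of $\Delta^{n_1}\times\cdots\times\Delta^{n_q}$ decomposes as a product $f_1\times\cdots\times f_q$ of faces of the factors, and a dimension count shows a 2-face is either a single 2-face of some $\Delta^{n_i}$ (a 3-gon) or a product of two edges from distinct factors (a 4-gon). For (a)$\Rightarrow$(c), I would take $M^{2n}=\mathbb{C}P^{n_1}\times\cdots\times\mathbb{C}P^{n_q}$ with the product Fubini--Study metric; this is a quasitoric manifold over $P^n$, invariant under the canonical $(S^1)^n$-action, and carries positive (hence nonnegative) sectional curvature.

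The main combinatorial step is (b)$\Rightarrow$(a). Working locally at a vertex $v$ of $P^n$ with incident edges $e_1,\dots,e_n$, each pair $e_i,e_j$ spans a unique 2-face $F_{ij}$; I declare $e_i\sim e_j$ if $F_{ij}$ is a 4-gon. Reflexivity and symmetry are immediate, and transitivity is the crux. To verify it I would examine the 3-face spanned by $e_i,e_j,e_k$: this is a simple 3-polytope whose 2-faces are only 3-gons or 4-gons, and a short Euler/$f$-vector computation (yielding $3t+2q=12$ for the numbers of triangular and quadrilateral faces) forces such a polytope to be $\Delta^3$, $\Delta^2\times\Delta^1$, or $(\Delta^1)^3$; in each case transitivity at $v$ is immediate by inspection. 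The equivalence classes then partition the facets through $v$ into blocks of sizes $n_1+1,\dots,n_q+1$, each spanning a simplex face through $v$. An induction on $n$, restricting to a facet opposite $v$ and gluing the resulting local product structures via the combinatorial rigidity of simple polytopes, upgrades this to a global combinatorial equivalence $P^n\cong\Delta^{n_1}\times\cdots\times\Delta^{n_q}$.

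The hard direction, and the main obstacle, is (c)$\Rightarrow$(b). My plan is to associate to each 2-face $F\subset P^n$ a 4-dimensional closed invariant submanifold $N_F=\pi^{-1}(F)\subset M^{2n}$, where $\pi : M^{2n}\to P^n$ is the orbit map; by the equivariant smooth structure discussed in the excerpt, $N_F$ can be arranged to be totally geodesic after an equivariant averaging, so it inherits a nonnegatively curved invariant action of a rank-2 subtorus. Invoking the classification of closed nonnegatively curved 4-manifolds with effective isometric $T^2$-action (in various forms due to Hsiang--Kleiner, Searle--Yang, and Grove--Wilking) restricts $N_F$ to a short list whose orbit polygons are exactly triangles and quadrilaterals, which gives (b). The delicate point is justifying the totally-geodesic reduction and tracking how local curvature estimates on $N_F$ translate into combinatorial constraints on the 2-face $F$; this is where the bulk of the Riemannian-geometric work lives.
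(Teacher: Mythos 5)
The combinatorial direction (b)$\Rightarrow$(a) has a genuine gap: the relation $e_i\sim e_j$ when $F_{ij}$ is a $4$-gon is not transitive, and the very list of $3$-face types you produce exposes this. At a vertex of the prism $\Delta^2\times\Delta^1$ the three $2$-faces incident to it are one triangle and two quadrilaterals, so two $4$-gon relations compose to a $3$-gon relation; and at a vertex of the cube $(\Delta^1)^3$ the $4$-gon relation is transitive but puts all three edges into a single class, whereas the product structure requires three singletons. The relation you actually want is the complementary one, $e_i\sim e_j$ iff $i=j$ or $F_{ij}$ is a $3$-gon: then your $3$-face case check does verify transitivity, and the classes at a vertex of $\Delta^{n_1}\times\cdots\times\Delta^{n_q}$ have sizes $n_1,\dots,n_q$ (not $n_i+1$), each spanning a simplex face. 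Even after that fix, the globalisation step is not yet an argument: ``gluing local product structures via combinatorial rigidity of simple polytopes'' does not say how vertex-local decompositions are to be matched across edges of $P$, and the rigidity results in the literature (bigraded Betti number rigidity) do not address this local-to-global problem. A separate small slip in (a)$\Rightarrow$(c): the product Fubini--Study metric on $\mathbb{C}P^{n_1}\times\cdots\times\mathbb{C}P^{n_q}$ is not positively curved (no nontrivial Riemannian product is), only nonnegatively curved, which is all that (c) asks.

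For comparison, the paper does not reprove the full cycle: it cites Wiemeler for the equivalences with (c) and supplies a new proof only of (b)$\Rightarrow$(a), by induction on $\dim P$ through Theorem~\ref{theo:1}, which states that a pseudomanifold all of whose vertex links are joins of boundaries of simplices is itself such a join. The induction makes every facet of $P$ a product of simplices, so $\partial P^*$ satisfies the lemma's hypotheses; the lemma's proof fills a maximal simplex of a vertex link with the unique complementary vertex provided by the pseudomanifold condition, splits into two cases according to whether that vertex lies in the link, and uses that a $1$-dimensional sphere join has at most four vertices to rigidify the structure globally. This bypasses the local-to-global gluing your vertex-star approach would still need. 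Your sketch for (c)$\Rightarrow$(b) via totally geodesic $4$-dimensional characteristic submanifolds and the classification of nonnegatively curved $T^2$-$4$-manifolds is essentially Wiemeler's argument (which the paper cites rather than reproving); note also that once the metric is $T^n$-invariant, the characteristic submanifolds, being fixed-point sets of subtori, are automatically totally geodesic by Kobayashi's theorem, so no additional averaging is needed.
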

        
        A \emph{quasitoric manifold} $M^{2n}$ over $P^n$ is the
        quotient space of $\mathcal{Z}_{P^n}$ under a free action of 
        a rank $m-n$ toral subgroup of $(S^1)^m$ (see~\cite{DaJan91}). 
        There is a canonical $(S^1)^n$-action on $M^{2n}$ induced from the
        canonical action of $(S^1)^m$ on $\mathcal{Z}_{P^n}$, which makes $M^n$ a 
        \emph{torus manifold} (see~\cite{HattMas03}). \n
   
        Theorem~\ref{thm:Wiem}(\textbf{b}) is a 
         corollary of~\cite[Proposition 4.5]{Wiem15} and
          Theorem~\ref{thm:Wiem}(\textbf{c}) is a corollary of~\cite[Lemma 4.2]{Wiem15}.
  Note that Theorem~\ref{thm:Wiem}(\textbf{b})
    is a particularly useful description of products of simplices.
   Indeed, the proofs of many other descriptions of products of simplices in this paper
   boil down to this one first. But the proof of~\cite[Proposition 4.5]{Wiem15}
    is a little long and not particularly easy to follow. We will give a shorter proof 
    of Theorem~\ref{thm:Wiem}(\textbf{b}) in the appendix to make 
    our paper more self-contained. \nn 
    
         Next, we give more descriptions of products of simplices from 
    combinatorial and topological viewpoints. For convenience let
     us introduce some notations first.
    
    \begin{itemize}
    \item For any topological space $X$ and any field $\mathbf{k}$, let
    $$\mathrm{hrk}(X;\mathbf{k}) = \sum^{\infty}_{i=0}\dim_{\mathbf{k}} H^i(X;\mathbf{k}).$$
    
    \item For any vertex $v$ in a simplicial complex $K$,
    we denote by $\mathrm{link}_Kv$ the \emph{link} of $v$ in $K$.  
    We denote a simplex spanned by vertices $v_0,v_1,\dots,v_p$ in $K$ by 
       $[v_0,v_1,\dots, v_p]$ and its boundary complex by $\partial[v_0,v_1,\dots, v_p]$.  
     \end{itemize}     
    \n
    
     In addition, for a simplicial complex $K$ on the vertex set $[m]=\{v_1,\cdots, v_m\}$, 
    we can define a new simplicial complex $L(K)$ from $K$, called the
   \emph{double of $K$}, where $L(K)$ is a simplicial complex on the vertex set
   $[2m]=\{ v_1,v'_1, \cdots, v_m,v'_m \}$
    determined by the following condition:
  $\omega \subset [2m]$ is a minimal (by inclusion) missing simplex of $L(K)$ if and only if 
  $\omega$ is of the form $\{v_{i_1}, v'_{i_1}, \cdots, v_{i_k}, v'_{i_k}\}$ where
  $\{ v_{i_1},\cdots, v_{i_k}\}$ is a minimal 
  missing simplex of $K$. Note that any minimal missing
  simplex in $L(K)$ must have even number of vertices.
  The double of $K$ is a special case of iterated simplicial wedge construction (also called
   simplicial wedge $J$-construction). Indeed, 
   by the notation introduced in~\cite{BBCG-10}, $L(K)=K(2,\cdots, 2)$.\n
  
   The following are some basic facts about $L(K)$ (see Ustinovsky~\cite{Uto09,Uto11}). \n
   \begin{itemize}
     \item $\dim(L(K)) = m + \dim(K)$ (\cite[Lemma 1.2]{Uto11}).\n
    
     \item $L(K_1 * K_2) = L(K_1) * L(K_2)$ (here $*$ is the join of two simplical complexes).\n
    
    \item If $K =\partial P^*$ where $P^*$ is the simplicial polytope dual to
            a simple convex polytope $P$,
        then $L(K) =\partial L(P)^*$ where $L(P)$ is a simple convex polytope called
        the \emph{double of $P$} (see~\cite{Uto09} for the construction of $L(P)$).\n
        
    \item $L(\partial \Delta^{k})=\partial \Delta^{2k+1}$.
   \end{itemize}
 \nn
 
  The following are some easy or well known facts on products of simplices.
   We want to list them and give a simple proof
  for reference.
    
     \begin{prop} \label{Prop:Polytope-product-2}
      Let $P$ be an $n$-dimensional simple polytope with $m$ facets and 
      let
       $K$ be the boundary of the simplicial polytope dual to $P$. Then the
        following statements are all equivalent.
    \begin{itemize}
       
     \item[(\textbf{a})] $P$ is combinatorially equivalent to a product of simplices.\n
         
     \item[(\textbf{b})] $K$ is simplicially isomorphic to 
               $\partial \Delta^{n_1} * \cdots * \partial \Delta^{n_q}$ for some 
               $n_1\cdots, n_q \in \mathbb{N}$.\n
               
     \item[(\textbf{c})] The vertex sets of all the minimal missing faces of $K$ form
      a partition of $V(K)$.\n          
      
     \item[(\textbf{d})] $L(K)$ is simplicially isomorphic to
          $\partial \Delta^{l_1} * \cdots * \partial \Delta^{l_q}$ for some 
          $l_1\cdots, l_q\in\mathbb{N}$.\n

     \item[(\textbf{e})] There exists some field $\mathbf{k}$ so that
                      $\mathrm{hrk}(\R\mathcal{Z}_K;\mathbf{k})=2^{m-\dim(K)-1}$, 
                  or equivalently $\mathrm{hrk}(\R\mathcal{Z}_P;\mathbf{k})=2^{m-n}$.\n

    \item[(\textbf{f})] There exists some field $\mathbf{k}$ so that
                      $\mathrm{hrk}(\mathcal{Z}_K;\mathbf{k})=2^{m-\dim(K)-1}$, 
                   or equivalently
                    $\mathrm{hrk}(\mathcal{Z}_P;\mathbf{k})=2^{m-n}$.
  \end{itemize}
 \end{prop}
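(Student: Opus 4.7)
The plan is to prove the equivalences in three stages: first the combinatorial chain (a)$\iff$(b)$\iff$(c)$\iff$(d), then (b)$\Rightarrow$(e),(f) by direct computation, and finally the harder backward directions (e) or (f) $\Rightarrow$ (c), which is the main obstacle.

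For the first stage, the equivalence (a)$\iff$(b) is polar duality: simple polytopes $P_1 \times P_2$ have simplicial dual boundary $\partial P_1^* * \partial P_2^*$, and the simplex is self-dual. For (b)$\iff$(c) I use that the minimal missing faces of a join $K_1 * K_2$ are the disjoint union of the MMFs of each factor, and that $\partial\Delta^n$ has a unique MMF equal to $V(\Delta^n)$; so (b) directly yields (c). Conversely, if MMFs $\sigma_1,\dots,\sigma_q$ of $K$ partition $V(K)$, then $F\subseteq V(K)$ is a simplex of $K$ iff $F \cap V(\sigma_i) \in \partial \sigma_i$ for every $i$, giving $K = \partial \sigma_1 * \cdots * \partial \sigma_q$. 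For (b)$\iff$(d), the forward direction follows from the properties $L(K_1 * K_2) = L(K_1) * L(K_2)$ and $L(\partial \Delta^k) = \partial \Delta^{2k+1}$ listed above; for the converse I apply (b)$\iff$(c) to $L(K)$ and observe that the MMFs of $L(K)$ biject with those of $K$ via the doubling $\{v_{i_1},\dots,v_{i_k}\} \leftrightarrow \{v_{i_1}, v'_{i_1}, \dots, v_{i_k}, v'_{i_k}\}$, so a partition of $V(L(K)) = [2m]$ descends to one of $V(K) = [m]$.

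For the second stage, if $P = \Delta^{n_1} \times \cdots \times \Delta^{n_q}$ then $\R\mathcal{Z}_P = S^{n_1} \times \cdots \times S^{n_q}$ and $\mathcal{Z}_P = S^{2n_1+1} \times \cdots \times S^{2n_q+1}$, so both total ranks equal $2^q = 2^{m-n}$, yielding (e) and (f). To compare (e) and (f) in general, I invoke the Baskakov--Buchstaber--Panov formulas
\[ H^*(\mathcal{Z}_K; \mathbf{k}) \cong \bigoplus_{J \subseteq [m]} \tilde H^{*-|J|-1}(K_J; \mathbf{k}), \quad H^*(\R\mathcal{Z}_K; \mathbf{k}) \cong \bigoplus_{J \subseteq [m]} \tilde H^{*-1}(K_J; \mathbf{k}), \]
where $K_J$ is the full subcomplex on $J$; summing over degrees yields the common total $1 + \sum_{\emptyset \neq J \subseteq [m]} \dim \tilde H^*(K_J; \mathbf{k})$, so (e)$\iff$(f).

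The main obstacle is the third stage: showing that $\mathrm{hrk}(\R\mathcal{Z}_K; \mathbf{k}) = 2^{m-n}$ forces the MMFs of $K$ to partition $V(K)$. Concretely, I must establish $\sum_{J \subseteq [m]} \dim \tilde H^*(K_J; \mathbf{k}) \geq 2^{m-n}$ with equality iff (c) holds. My planned approach is to exhibit $2^{m-n}$ subsets $J$ contributing independent classes: pick a maximal family $\{\sigma_1, \dots, \sigma_s\}$ of pairwise disjoint MMFs of $K$ and, for each $S \subseteq \{1, \dots, s\}$, let $J(S) = \bigcup_{i \in S} V(\sigma_i)$, so that $K_{J(S)}$ contains the sphere $\partial \sigma_{i_1} * \cdots * \partial \sigma_{i_r}$ as a full subcomplex whose top reduced cohomology survives; this gives at least $2^s$ independent contributions, and I would then argue $s \geq m-n$ with equality forcing the $\sigma_i$ to cover $V(K)$ disjointly. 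Alternatively, I can leverage Theorem~\ref{thm:Wiem}(b): if some $2$-face of $P$ is a $k$-gon with $k \geq 5$, the $k$ facets bounding that face induce a full subcomplex of $K$ isomorphic to a $k$-cycle, contributing an extra class beyond the expected sphere contributions and forcing strict inequality. This lower bound and equality analysis is the main technical content; the remainder is essentially bookkeeping.
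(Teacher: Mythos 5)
Your stages (1) and (2) track the paper's argument for $(\textbf{a})\Leftrightarrow(\textbf{b})\Leftrightarrow(\textbf{c})\Leftrightarrow(\textbf{d})$ and $(\textbf{a})\Rightarrow(\textbf{e}),(\textbf{f})$ closely, and the observation that $\mathrm{hrk}(\R\mathcal{Z}_K;\mathbf{k})=\mathrm{hrk}(\mathcal{Z}_K;\mathbf{k})$ holds for every $K$ (by summing the Hochster-type decompositions over all degrees, where the degree shift by $|J|$ washes out) is a clean, more direct route to $(\textbf{e})\Leftrightarrow(\textbf{f})$; the paper instead reduces $(\textbf{f})$ to $(\textbf{e})$ through Ustinovsky's homeomorphism $\mathcal{Z}_K\cong\R\mathcal{Z}_{L(K)}$ and the dimension count $\dim L(K)=m+\dim K$.

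The gap is in stage (3), the direction $(\textbf{e})\Rightarrow(\textbf{a})$, and both of your proposed routes fail. In the first, you take pairwise disjoint minimal missing faces $\sigma_1,\dots,\sigma_s$, set $J(S)=\bigcup_{i\in S}V(\sigma_i)$, and claim $K_{J(S)}$ \emph{contains} the sphere $\partial\sigma_{i_1}*\cdots*\partial\sigma_{i_r}$. The containment actually goes the other way: any face of $K$ supported on $J(S)$ omits at least one vertex of each $\sigma_i$ (since each $\sigma_i$ is missing), hence $K_{J(S)}\subseteq\partial\sigma_{i_1}*\cdots*\partial\sigma_{i_r}$, and this inclusion may be strict, in which case $K_{J(S)}$ is a proper subcomplex of a sphere and its top reduced cohomology vanishes. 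For instance, if $K$ is the boundary of a pentagon on $v_1,\dots,v_5$ and $\sigma_1=\{v_1,v_3\}$, $\sigma_2=\{v_2,v_5\}$, then $K_{\{v_1,v_2,v_3,v_5\}}$ is a path, not the $4$-cycle $\partial\sigma_1*\partial\sigma_2$, and contributes nothing. Your second route (``the $k$ facets bounding a $k$-gonal $2$-face induce a full subcomplex of $K$ isomorphic to a $k$-cycle'') has the analogous difficulty: the link of the corresponding codimension-two simplex is a $k$-cycle, but the \emph{full} subcomplex of $K$ on that vertex set need not be. The paper sidesteps both problems by invoking not just the statement but the \emph{proof} of Ustinovsky's lower bound: for each vertex $v$ there is a subspace $X\subseteq\R\mathcal{Z}_K$ equal to a disjoint union of $2^{m-m_v-1}$ copies of $\R\mathcal{Z}_{\link_K v}$, which together with the lower bound for $\link_Kv$ forces equality in $(\textbf{e})$ to propagate to every vertex link; iterating down to $2$-faces and using the genus $1+(k-4)2^{k-3}$ of $\R\mathcal{Z}$ of a $k$-gon rules out $k\geq 5$, and Theorem~\ref{thm:Wiem}$(\textbf{b})$ then gives $(\textbf{a})$. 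You would need some comparable mechanism to complete your argument.
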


    \begin{proof}
       The equivalences of $(\textbf{a}) \Leftrightarrow (\textbf{b})$ and 
       $(\textbf{b}) \Leftrightarrow (\textbf{c})$ 
        are easy to see.
       \nn
     
   $(\textbf{b}) \Rightarrow (\textbf{d}).$ \
     If $K=\partial \Delta^{n_1} * \cdots * \partial \Delta^{n_q}$, then 
     $$L(K) = L(\partial \Delta^{n_1} * \cdots * \partial \Delta^{n_q})=
     L(\partial \Delta^{n_1}) * \cdots * L(\partial \Delta^{n_q}) = 
         \partial \Delta^{2n_1+1} * \cdots * \partial \Delta^{2n_q+1}. $$
    
    $(\textbf{d}) \Rightarrow (\textbf{c}).$ \ 
    Suppose $L(K)=\partial \Delta^{l_1} * \cdots * \partial \Delta^{l_q}$.
       Notice that for each $1\leq j \leq q$, 
       $\Delta^{l_j}$ is a minimal missing simplex of $L(K)$. So $\Delta^{l_j}$ 
       must have even number of vertices, which implies that $l_j$ is an odd integer.
       Then by $(\textbf{b}) \Leftrightarrow (\textbf{c})$,
        the vertex sets of all the minimal missing faces
       of $L(K)$ form a partition of $V(L(K))$. This forces the vertex sets of all the
       minimal missing faces of $K$ to form a partition of $V(K)$ as well, which is (c).
          \nn

  $(\textbf{a}) \Rightarrow (\textbf{e})$ and $(\textbf{f}).$
    If $P=\Delta^{n_1}\times\cdots\times \Delta^{n_q}$, $n_1+\cdots + n_q =n$,
    then $$\mathcal{Z}_P = S^{2n_1+1}\times \cdots \times S^{2n_q+1},\ \ 
    \R\mathcal{Z}_P=S^{n_1}\times \cdots \times S^{n_q}.$$
    The number of facets of $P$ is $m=n+q$. It is clear that for any field $\mathbf{k}$,
    $$\mathrm{hrk}(\mathcal{Z}_P;\mathbf{k}) = 
       \mathrm{hrk}(\R\mathcal{Z}_P;\mathbf{k})=
       2^q=2^{m-n}.$$

 $(\textbf{e}) \Rightarrow (\textbf{a}).$
   For any vertex $v$ of $K$, let $m_v$ be the number of vertices in
   $\mathrm{link}_K v$. According to the 
    proof of~\cite[Theorem 3.2]{Uto11} (note that 
     the argument there works for any coefficient), there is a subspace $X$ of 
   $\R\mathcal{Z}_K$ so that 
   $$\mathrm{hrk}(\R\mathcal{Z}_P;\mathbf{k})  = \mathrm{hrk}(\R\mathcal{Z}_K;\mathbf{k}) \geq \mathrm{hrk}(X;\mathbf{k}), $$
   where $X$ is the disjoint union of $2^{m-m_v-1}$ copies of 
   $\R\mathcal{Z}_{\mathrm{link}_K v}$.
   So we have
   $$2^{m-n} = \mathrm{hrk}(\R\mathcal{Z}_K;\mathbf{k}) \geq 2^{m-m_v-1}
   \mathrm{hrk}(\R\mathcal{Z}_{\mathrm{link}_K v};\mathbf{k}). $$
   Then $\mathrm{hrk}(\R\mathcal{Z}_{\mathrm{link}_K v};\mathbf{k}) \leq 2^{m_v-n+1}$.
   On the other hand, ~\cite[Theorem 3.2]{Uto11} tells us that
   $\mathrm{hrk}(\R\mathcal{Z}_{\mathrm{link}_K v};\mathbf{k}) \geq 2^{m_v-n+1}$
   (since $\dim(\mathrm{link}_K v)=n-2$). So we obtain
   $$\mathrm{hrk}(\R\mathcal{Z}_{\mathrm{link}_K v};\mathbf{k}) = 2^{m_v-n+1}.$$
   
   Note if $v$ is the vertex corresponding to a facet $F$ of $P$, then 
      $\R\mathcal{Z}_{\mathrm{link}_K v} =\R\mathcal{Z}_F$. So we have shown that
     if the condition $(\textbf{e})$ holds for $P$, it should hold
      for any facet of $P$ as well.
    \n
    
    By iterating the above argument, we deduce that
     the condition $(\textbf{e})$ holds for all the two dimensional faces of $P$.
     It is not hard to see that the real moment-angle manifold of a $k$-gon is
     a closed connected orientable surface with genus $1+(k-4)2^{k-3}$ 
     (see~\cite[Proposition 4.1.8]{BP15}).
    So any $2$-dimensional face of $P$ is either
  a $3$-gon or a $4$-gon. Then by Theorem~\ref{thm:Wiem}(b),
   the polytope $P$ is combinatorially equivalent to a product of simplices.
        \nn
  
 $(\textbf{f}) \Rightarrow (\textbf{a}).$
   First of all, \cite[Lemma 2.2]{Uto11} says that there is a homeomorphism
    $\mathcal{Z}_K\cong \R\mathcal{Z}_{L(K)}$.
  Since $K$ has $m$ vertices, $\dim(L(K))= m+\dim(K) = m+n-1$. So if
  $\mathrm{hrk}(\mathcal{Z}_{K};\mathbf{k})=2^{m-n}$, we have 
  $$ \mathrm{hrk}(\R\mathcal{Z}_{L(K)};\mathbf{k}) = 2^{m-n} = 
      2^{2m-(m+n-1)-1} = 2^{2m-\dim(L(K))-1}. $$
  So $(\textbf{e})$ holds for $L(K)$.
   Since we have already shown
    $(\textbf{e}) \Rightarrow (\textbf{a}) \Leftrightarrow (\textbf{b})$,
   $L(K)$ is simplicially isomorphic to 
   $\partial \Delta^{n_1} * \cdots * \partial \Delta^{n_q}$ for some 
               $n_1\cdots, n_q \in \mathbb{N}$. Then we finish the proof by the equivalence
   of $(\textbf{d})$ and $(\textbf{a})$.   
    \end{proof}
    \nn
  
  \begin{rem}
     The equivalences of (\textbf{b}), (\textbf{e}) and (\textbf{f}) 
     in Proposition~\ref{Prop:Polytope-product-2}
      are stated in~\cite[Section 4.8]{BP15} as an exercise.
  \end{rem}
  
 Moreover, we can judge whether a simple polytope $P$ is combinatorially equivalent to 
 a specific product of simplices via some combinatorial invariants called 
 \emph{bigraded Betti numbers}, which are derived from the 
 Stanley-Reisner ring of $P$ (see~\cite[Sec 3.2]{BP15} for the definition).
  Indeed, it is shown in~\cite{SuyPanDong10} that a simple polytope $P$ is 
   combinatorially equivalent to $\Delta^{n_1} \times 
  \cdots \times  \Delta^{n_q}$ if and only if $P$ has the same bigraded Betti numbers as
  $\Delta^{n_1} \times \cdots \times  \Delta^{n_q}$. 
   Simple polytopes with this kind of property are called 
   \emph{combinatorially rigid} (see~\cite{
   SuyKim11, SuyMasDong11}).
\nn
  
   Next, we give a new way to judge whether a simple polytope is combinatorialy equivalent to a product of simplices.\n
   
   \begin{thm} \label{Thm:New-Comb}
    Let $K$ be the boundary of the simplicial polytope dual to a simple polytope $P$.
    Then $P$ is combinatorially equivalent to a product of simplices if and only if
    the following conditions hold for $K$:
   for any maximal simplex $\sigma$ in $K$ and any vertex $v$ of $\sigma$,
               the full subcomplex of $K$ by restricting to $V(K)-V(\sigma)$
               is a simplex of $K$, denoted by $\xi_{\sigma}$, and
                moreover the intersection of $\xi_{\sigma}$ and $\mathrm{link}_Kv$ 
                is a simplex (could be empty) as well.
  \end{thm}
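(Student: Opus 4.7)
The plan is to prove both implications of the theorem, with the forward direction by direct structural calculation and the reverse by an induction on $n=\dim P$ that reduces the problem to Theorem~\ref{thm:Wiem}(b). For the forward direction, assume $P\cong\Delta^{n_1}\times\cdots\times\Delta^{n_q}$, so that $K\cong\partial\Delta^{n_1}\ast\cdots\ast\partial\Delta^{n_q}$. Any maximal simplex $\sigma$ of $K$ is obtained by omitting exactly one vertex $u_\ell$ from each factor $\partial\Delta^{n_\ell}$, and $V(\xi_\sigma)=\{u_1,\dots,u_q\}$ automatically spans a $(q-1)$-simplex of $K$ because its vertices lie in pairwise distinct factors of the join. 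For a vertex $v$ of $\sigma$ lying in the $\ell$-th factor, a short computation shows that $\xi_\sigma\cap\mathrm{link}_K v$ equals $\xi_\sigma$ when $n_\ell\ge 2$ and $\xi_\sigma\setminus\{u_\ell\}$ when $n_\ell=1$, a simplex in either case.

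For the reverse direction I induct on $n$. The base case $n=2$ is direct: $K$ is a cycle with $m$ vertices, and the first hypothesis forces $m-2\le\dim K+1=2$, so $P$ has at most four vertices and is therefore a triangle or a quadrilateral. For the inductive step $n\ge 3$, the plan is to show that the hypotheses pass from $K$ to $\mathrm{link}_K v$ for every vertex $v$ of $K$; once this is established, the inductive hypothesis makes every facet of $P$ a product of simplices, every $2$-face of $P$ is then a $3$-gon or a $4$-gon, and Theorem~\ref{thm:Wiem}(b) concludes that $P$ is a product of simplices.

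Transferring the first hypothesis to $\mathrm{link}_K v$ is immediate: a maximal simplex $\sigma'$ of $\mathrm{link}_K v$ has the form $\sigma\setminus\{v\}$ for a unique maximal simplex $\sigma=\{v\}\cup\sigma'$ of $K$, and then the full subcomplex of $\mathrm{link}_K v$ on $V(\mathrm{link}_K v)\setminus V(\sigma')$ coincides with $\xi_\sigma\cap\mathrm{link}_K v$, which is a simplex by the second hypothesis for $K$. The key preliminary for the remaining transfer is the Lemma that, for every $w\in V(\sigma)$, the set $B_w:=V(\xi_\sigma)\setminus V(\xi_\sigma\cap\mathrm{link}_K w)$ of vertices of $\xi_\sigma$ non-adjacent to $w$ in $K$ has at most one element. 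To see this, let $\sigma^{(w)}=(\sigma\setminus\{w\})\cup\{u'\}$ be the unique maximal simplex of $K$ sharing with $\sigma$ the ridge opposite $w$; applying the first hypothesis to $\sigma^{(w)}$ gives $(V(\xi_\sigma)\setminus\{u'\})\cup\{w\}\in K$, so $w$ is adjacent in $K$ to every vertex of $V(\xi_\sigma)\setminus\{u'\}$ and hence $B_w\subseteq\{u'\}$.

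The main obstacle is transferring the second hypothesis to $\mathrm{link}_K v$: for each $u\in V(\sigma')$ one must prove that $\xi_\sigma\cap\mathrm{link}_K\{u,v\}$ is a simplex of $\mathrm{link}_K v$, which unwinds to the claim that $\{u,v\}\cup V^{u,v}\in K$ for $V^{u,v}:=\{w\in V(\xi_\sigma):\{u,v,w\}\in K\}$. Using the Lemma with flip partners $\tilde u,\tilde v\in V(\xi_\sigma)$, one has $V^{u,v}\subseteq V(\xi_\sigma)\setminus(B_u\cup B_v)$; applying the second hypothesis at the ridge-flipped maximal simplices $\sigma^{(u)}$ and $\sigma^{(v)}$ at the vertices $v$ and $u$ respectively produces faces of $K$ of the forms $\{u,v\}\cup(V(\xi_\sigma)\setminus\{\tilde u\})$ and $\{u,v\}\cup(V(\xi_\sigma)\setminus\{\tilde v\})$, from which a case analysis on whether $B_u,B_v$ are empty and whether $\tilde u=\tilde v$ identifies $V^{u,v}$ exactly and verifies $\{u,v\}\cup V^{u,v}\in K$. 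I expect the subtlest subcase to be $B_u=B_v=\emptyset$ with $\tilde u\ne\tilde v$, the technical heart of the argument, which may require invoking the hypotheses additionally at a ridge-flipped maximal simplex involving a third vertex $w\in V(\sigma)\setminus\{u,v\}$ in order to certify the needed face.
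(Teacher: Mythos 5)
Your forward direction is correct and matches the paper's argument almost verbatim. Your reverse-direction plan — transfer the two hypotheses down to $\mathrm{link}_K v$, iterate to $1$-dimensional links, and close via Theorem~\ref{thm:Wiem}(\textbf{b}) — is exactly the paper's strategy, and your transfer of the first hypothesis (that $\xi_{\sigma'}$ computed inside $\mathrm{link}_K v$ equals $\xi_\sigma\cap\mathrm{link}_K v$) is correct and is the same computation the paper performs.

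The transfer of the second hypothesis, however, is genuinely incomplete, and you acknowledge this yourself. Your lemma $|B_w|\le 1$ via the ridge flip $\sigma^{(w)}$ is correct, and you correctly reduce the claim to $\{u,v\}\cup V^{u,v}\in K$. But the case analysis that is supposed to deliver this is not carried out: you end by saying that the case $B_u=B_v=\emptyset$, $\tilde u\ne\tilde v$ ``may require invoking the hypotheses additionally at a ridge-flipped maximal simplex involving a third vertex.'' In that case the hypotheses applied at $\sigma^{(u)}$ and $\sigma^{(v)}$ give you the two faces $(V(\xi_\sigma)\setminus\{\tilde u\})\cup\{u,v\}$ and $(V(\xi_\sigma)\setminus\{\tilde v\})\cup\{u,v\}$, and what you need is their union $V(\xi_\sigma)\cup\{u,v\}\in K$; a union of two faces is not in general a face, and you supply no argument for it. (The parallel subcase $\tilde u=\tilde v$ has the same issue when $\{\tilde u,u,v\}\in K$.) This is the hole: the proposal identifies the right target but does not hit it.

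For comparison, the paper closes this step in two lines by a completely different device. Writing $\tau=\sigma\setminus\{u\}$, it represents
\[
(\xi_\sigma\cap\mathrm{link}_K u)\cap\mathrm{link}_{\mathrm{link}_K u}v
\quad\text{as}\quad
(\xi_\sigma\cap\mathrm{link}_K u)\cap(\xi_\sigma\cap\mathrm{link}_K v),
\]
an intersection of two simplices of $K$, which is automatically a simplex (the intersection of two faces of a simplicial complex is the face spanned by the common vertices). You would do well to examine whether the identity $\mathrm{link}_{\mathrm{link}_K u}v=\mathrm{link}_K u\cap\mathrm{link}_K v$ that this rests on actually holds in your setting: in general $\mathrm{link}_{\mathrm{link}_K u}v=\mathrm{link}_K\{u,v\}$, which is only a \emph{subcomplex} of $\mathrm{link}_K u\cap\mathrm{link}_K v$, so the displayed equality needs justification beyond the bare definition. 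But whatever one thinks of that shortcut, your own ridge-flip route, as written, stops short of a proof and must be completed before it can be accepted.
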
     
 \begin{proof}
   Suppose $P$ is a product of simplices. Then $K=
   \partial\Delta^{n_1}*\dots*\partial\Delta^{n_q}$ for some $n_1,\cdots, n_q\in \mathbb{N}$.
 Denote the vertices of $\partial\Delta^{n_k}$ by $v_0^k,v_1^k,\dots,v_{n_k}^k$ for each
 $k=1,\dots,q$. Then for a maximal simplex $\sigma$ in $K$,
  there exists $0\leq l_k \leq n_k$, $k=1,\dots,q$, so that
  $$ \sigma = [v_0^1,\dots,\hat{v}_{l_1}^1,\dots, v_{n_1}^1] * 
  [v_0^2,\dots,\hat{v}_{l_2}^2,\dots, v_{n_2}^2] * \dots * 
  [v_0^q,\dots,\hat{v}_{l_q}^q,\dots, v_{n_q}^q] $$ 
  where $[v_0^k,\dots,\hat{v}_{l_k}^k,\dots, v_{n_k}^k]$ is the simplex spanned by
  all the vertices of $\partial\Delta^{n_k}$ except $v_{l_k}^k$ for each 
  $1\leq k \leq q$. It is easy to see that the full subcomplex of $K$ by 
  restricting to $V(K)-V(\sigma)$ is just the simplex 
  $[v_{l_1}^1, v_{l_2}^2, \dots, v_{l_q}^q]=v_{l_1}^1*v_{l_2}^2\dots *v_{l_q}^q$.
   All the vertices of $\sigma$ are
    $\{ v^{k}_{i_k} \,; \, 0\leq i_k \neq l_k \leq n_k, 1\leq k \leq q\}$. And we have
   $$\link_K v^k_{i_k} = \partial\Delta^{n_1}* \dots *  
    \partial [v_0^k,\dots,\hat{v}_{i_k}^k,\dots, v_{n_k}^k] * \dots * \partial\Delta^{n_q}.  $$
  Note that when $n_k=1$, 
  $\partial [v_0^k,\dots,\hat{v}_{i_k}^k,\dots, v_{n_k}^k]$ is empty.
  Then the intersection of $[v_{l_1}^1, v_{l_2}^2, \dots, v_{l_q}^q]$ and $\link_K v^k_i$ is
  exactly the simplex $[v_{l_1}^1, v_{l_2}^2,\dots, v_{l_q}^q]$ if $n_k>1$, and
  is $[v_{l_1}^1, \dots, \hat{v}_{l_k}^k, \dots, v_{l_q}^q]$ if $n_k=1$. The necessity of 
  these conditions is proved.\n
  
   For the sufficiency, we first show that if these conditions holds for $K$, then they also hold 
   for the link of any vertex of $K$. When $\dim(K)\leq 1$, the theorem is obviously true. So
   we assume $\dim(K)\geq 2$ below. Let $u$ be an arbitrary vertex of $K$. Let $\sigma$ be 
   a maximal simplex of $K$ containing $u$ and let $v$ be an arbitrary vertex of $\sigma$
   different from $u$.
   By our assumption, the intersection
   $\xi_{\sigma}\cap \link_K u$ and $\xi_{\sigma}\cap \link_K v$ are both
   simplices. 
   Let $\tau$ be the simplex with $V(\tau) = V(\sigma)-\{u\}$. Then
   $\tau$ is a maximal simplex in $\link_K u$. 
   Since $V(\xi_{\sigma}) = V(K) - V(\sigma)$, we have
   $$ V(\link_K u) - V(\tau) = V(\xi_{\sigma}) \cap V(\link_K u).$$ Then since
   $\xi_{\sigma}\cap \link_K u$ is a simplex,
   the full subcomplex of $\link_K u$ by restricting to $V(\link_K u) - V(\tau)$
  must agree with $\xi_{\sigma}\cap \link_K u$. Moreover, since $v$ 
   could be any vertex of $\tau$, we need to show that the 
   intersection of the simplex $\xi_{\sigma}\cap \link_K u$
   with $\link_{\link_K u } v$ is also a simplex. 
   Observe that $\link_{\link_K u } v =  \link_K u \cap \link_K v $. So we have
   \begin{align*}
      (\xi_{\sigma}\cap \link_K u) \cap \link_{\link_K u } v &=
      (\xi_{\sigma}\cap \link_K u) \cap (\link_K u \cap \link_K v) \\
       &= (\xi_{\sigma}\cap \link_K u) \cap (\xi_{\sigma} \cap \link_K v)
   \end{align*}
   The intersection of the two simplices $\xi_{\sigma}\cap \link_K u$ and 
   $\xi_{\sigma}\cap \link_K v$ has to be a simplex (could be empty)
    by the definition of simplicial complex.
   Moreover when $\sigma$ ranges over all the maximal simplices of $K$ containing $u$, 
   the vertex $v$ will range
   over all the vertices in $\link_Ku$.
   So our argument shows that these conditions hold for $\link_K u$. \n
   
   By iterating the above argument, we can prove
    that for any codimension-two simplex $\eta$ of $K$,
   the link of $\eta$ in $K$ is a simplicial circle which satisfies the conditions.
   This forces the link of $\eta$ is either $\partial \Delta^2$ or 
   $\partial \Delta^1 * \partial \Delta^1$. Dually it means that any
   $2$-dimensional face of $P$ is either
  a $3$-gon or a $4$-gon. Then by Theorem~\ref{thm:Wiem}(b),
   the polytope $P$ is combinatorially equivalent to a product of simplices.
 \end{proof}

  \nn
    
    Next, we give some descriptions of products of simplices in terms of
  geometric conditions on real moment-angle manifolds of simple convex polytopes. 
     We first recall a concept in metric geometry (see~\cite[Definition 3.1.12]{Burago2001}). \n
      
     \begin{defi}[Quotient Metric Space] \label{Def:Quotient-Metric}
      Let $(X,d)$ be a metric space and let $\mathcal{R}$ be an equivalence relation on $X$.
      The quotient semi-metric $d_\mathcal{R}$ is defined as
      $$d_\mathcal{R}(x,y) = \mathrm{inf}\big\{ \sum^k_{i=1} d(p_i,q_i) \,:\, 
      p_1=x, \, q_k=y,\, k\in \mathbb{N} \big\}, $$
       where the infimum is taken over all choices of $\{p_i\}$ and $\{q_i\}$ such that
      the point $q_i$ is $\mathcal{R}$-equivalent to $p_{i+1}$ for all $i=1,\cdots, k-1$. 
       Moreover, by identifying
      points with zero $d_\mathcal{R}$-distance, we obtain a metric space
       $(X\slash \mathcal{R},d)$ called the \emph{quotient metric space} of $(X,d)$.
    \end{defi} 
        
    Suppose $P$ is a simple convex polytope in a Euclidean space $\R^d$. Consider
    $P$ to be equipped with the \emph{intrinsic metric}. More precisely,
     the intrinsic metric on $P$ defines the distance between any two points $x$ and $y$ in $P$ 
     to be the infimum of lengths of piecewise smooth paths 
     in $P$ that connect $x$ and $y$. Note that the intrinsic metric on 
     $P$ coincides with the subspace metric on $P$ since $P$ is convex.\n
     
     By the construction in~\eqref{Equ:glue-back},
     $\R\mathcal{Z}_P=M(P,\lambda)$ is a closed manifold obtained by gluing $2^m$ 
     copies of $P$ along their facets. We can assume that the $2^m$ copies of $P$ are 
      congruent convex polytopes inside the same Euclidean space
       and the gluings of their facets are all isometries. Then 
      by Definition~\ref{Def:Quotient-Metric} we obtain a quotient metric on $\R\mathcal{Z}_P$,
       denoted by $d_P$.
      It is clear that the metric $d_P$ is 
      invariant with respect to the canonical action of $(\Z_2)^m$ on
     $\R\mathcal{Z}_P$ (see~\eqref{Equ:Canon-Action}).
     
     \begin{rem}
     We can also call $(\R\mathcal{Z}_P,d_P)$  a
      \emph{Euclidean polyhedral space}, 
      which just means that
     it is built from Euclidean polyhedra (see~\cite[Definition 3.2.4]{Burago2001}).
     \end{rem}
     \n
    Note that if $P'$ is another simple convex polytope combinatorially equivalent to $P$ but not
    congruent to $P$, 
     the two metric spaces $(\R\mathcal{Z}_{P'},d_{P'})$ and $(\R\mathcal{Z}_{P},d_P)$ are
     not isometric in general
      (though $\R\mathcal{Z}_{P'}$ is homeomorphic to $\R\mathcal{Z}_{P}$).
\n
  
  \begin{thm} \label{thm:Polytope-product-1}
    Let $P$ be an $n$-dimensional simple convex polytope, 
     $n\geq 2$, with $m$ facets. Then the following statements are all equivalent.
   \begin{itemize}
          \item[(\textbf{a})] $P$ is combinatorially equivalent to a product of simplices.\n
          
          \item[(\textbf{b})] There exists a non-negatively curved Riemannian metric on
          $\R\mathcal{Z}_P$ that is invariant under the canonical $(\Z_2)^m$-action on
          $\R\mathcal{Z}_P$.\n
          
          \item[(\textbf{c})] There exists a simple convex polytope $P'$
         combinatorially equivalent to $P$ 
          so that the metric space $(\R\mathcal{Z}_{P'},d_{P'})$ is non-negatively curved.\n
          
          \item[(\textbf{d})] There exists a simple convex polytope $P'$
         combinatorially equivalent to $P$ so that all the dihedral angles 
         of $P'$ are non-obtuse.
   \end{itemize}   
  \end{thm}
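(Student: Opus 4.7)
My plan is to establish the cycle $(a)\Rightarrow(b)$, $(a)\Rightarrow(d)$, $(d)\Rightarrow(c)$, and to close it by showing both $(b)\Rightarrow(a)$ and $(c)\Rightarrow(a)$ through a common reduction to Wiemeler's $2$-face criterion, Theorem~\ref{thm:Wiem}(b).

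The two implications out of $(a)$ are by direct construction. For $(a)\Rightarrow(b)$, if $P$ is combinatorially equivalent to $\Delta^{n_1}\times\cdots\times\Delta^{n_q}$, then $\R\mathcal{Z}_P\cong S^{n_1}\times\cdots\times S^{n_q}$, and the canonical $(\Z_2)^{n_k+1}$-action on $S^{n_k}$ is by coordinate sign changes in $\R^{n_k+1}$, which preserves the round metric. The product of round metrics is therefore $(\Z_2)^m$-invariant and has non-negative sectional curvature. For $(a)\Rightarrow(d)$ I would take $P'$ to be the geometric product of regular simplices: each regular $n_k$-simplex (with $n_k\geq 2$) has dihedral angles $\arccos(1/n_k)<\pi/2$, and every dihedral angle of the product is either one of these or exactly $\pi/2$, the latter occurring between facets coming from different factors.

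For $(b)\Rightarrow(a)$ and $(c)\Rightarrow(a)$ the crucial observation is that for every $2$-dimensional face $F^2$ of $P$ (respectively of $P'$), the preimage $\pi^{-1}_\lambda(F^2)$ is a disjoint union of copies of $\R\mathcal{Z}_{F^2}$, each being the fixed-point set of a subgroup of $(\Z_2)^m$. Since the assumed metric is $(\Z_2)^m$-invariant, these submanifolds are totally geodesic and inherit non-negative curvature. But when $F^2$ is a $k$-gon, $\R\mathcal{Z}_{F^2}$ is a closed orientable surface of genus $1+(k-4)2^{k-3}$; Gauss--Bonnet in the smooth setting of $(b)$, and the classification of non-negatively curved closed $2$-dimensional Alexandrov spaces in the polyhedral setting of $(c)$, force the genus to be at most $1$ and hence $k\leq 4$. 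Theorem~\ref{thm:Wiem}(b) then yields $(a)$.

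The remaining step $(d)\Rightarrow(c)$ is what I expect to be the main obstacle. Non-obtuseness immediately gives that the total angle $4\theta$ around every codimension-two face of $\R\mathcal{Z}_{P'}$ satisfies $4\theta\leq 2\pi$, but upgrading this to Alexandrov curvature bounded below by $0$ on the whole Euclidean polyhedral space $(\R\mathcal{Z}_{P'},d_{P'})$ requires checking that the spherical space of directions at every point has curvature $\geq 1$. I would proceed by induction on dimension, using that the link of a face of $\R\mathcal{Z}_{P'}$ is a spherical polyhedral space glued from copies of the spherical link of the corresponding face of $P'$, which is itself a spherical polytope whose dihedral angles coincide with dihedral angles of $P'$ along faces of one lower codimension and are therefore again non-obtuse. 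A careful formulation of this spherical analogue of the construction in \eqref{Equ:glue-back}, together with the verification that non-obtuseness is preserved under passage to links, is the technical point that will need the most care.
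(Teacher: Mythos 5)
Your implication graph reverses the paper's handling of $(c)$ and $(d)$: you propose the cycle $(a)\Rightarrow(d)\Rightarrow(c)\Rightarrow(a)$, whereas the paper proves $(a)\Rightarrow(c)\Rightarrow(d)\Rightarrow(a)$. This reversal is exactly where the trouble lies. Your step $(d)\Rightarrow(c)$ would require proving from scratch that non-obtuse dihedral angles force the Euclidean polyhedral metric $d_{P'}$ on $\R\mathcal{Z}_{P'}$ to have Alexandrov curvature $\geq 0$. You correctly flag this as the main obstacle, but you do not close it, and the link-condition induction you sketch is genuinely delicate: unlike Gromov's lemma for $CAT$ upper bounds, the lower-curvature analogue requires a recursive verification that all iterated links of strata are spherical polyhedral spaces of curvature $\geq 1$ and diameter $\leq\pi$, and carrying this out from the hypothesis of non-obtuse dihedral angles alone is essentially a standalone theorem in Alexandrov geometry. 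The paper sidesteps the whole issue by proving $(a)\Rightarrow(c)$ \emph{directly}: it takes $P'$ to be the product of standard simplices $\Delta^{n_1}\times\cdots\times\Delta^{n_q}$, observes that $(\R\mathcal{Z}_{P'},d_{P'})$ is then a metric product of the spaces $(\R\mathcal{Z}_{\Delta^{n_i}},d_{\Delta^{n_i}})$, and identifies each $(\R\mathcal{Z}_{\Delta^k},d_{\Delta^k})$ isometrically with the boundary of the $(k+1)$-dimensional cross-polytope, a convex hypersurface in $\R^{k+1}$ and hence non-negatively curved by classical Alexandrov theory. Products of non-negatively curved Alexandrov spaces are non-negatively curved, so $(c)$ follows. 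The paper then gets $(c)\Rightarrow(d)$ by an elementary contradiction: an obtuse dihedral angle produces, after reflecting a short perpendicular segment across a facet in two adjacent copies of $P'$, a geodesic triangle that is strictly thinner than its Euclidean comparison triangle. Finally $(d)\Rightarrow(a)$ is a clean Euclidean computation with inward normal vectors showing that non-obtuseness passes down to facets and ultimately to interior angles of $2$-faces.

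There is also a secondary gap in your $(c)\Rightarrow(a)$. In the smooth setting of $(b)$, the fixed-point sets of the isometric $(\Z_2)^m$-action are totally geodesic by Kobayashi's theorem, and your Gauss--Bonnet argument is exactly what the paper does for $(b)\Rightarrow(a)$. But in the Alexandrov setting of $(c)$ the claim that the fixed-point set $\pi_\lambda^{-1}(F^2)$, with its intrinsic metric, inherits curvature $\geq 0$ from the ambient space is not free; it rests on Petrunin's theory of extremal subsets and quasigeodesics, which you invoke implicitly as if it were as elementary as the Riemannian case. The paper's route through $(c)\Rightarrow(d)\Rightarrow(a)$ replaces all of this Alexandrov machinery with elementary Euclidean geometry. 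Your $(a)\Rightarrow(b)$, $(b)\Rightarrow(a)$, and $(a)\Rightarrow(d)$ are all correct and match or harmlessly supplement the paper's proof, but as it stands the chain from $(d)$ or $(c)$ back to $(a)$ is not complete.
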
   

 Note that a Riemannian metric on a manifold
 is non-negatively curved means that its sectional curvature
 is everywhere non-negative, while a
  metric space being non-negatively curved is defined via comparison
 of triangles (see~\cite[Section 4]{Burago2001}).
  \begin{proof}
    $(\textbf{a}) \Rightarrow (\textbf{b})$\
     The real moment-angle manifold of a product of simplices
   $\Delta^{n_1}\times\cdots \times \Delta^{n_q}$ is diffeomorphic to a product of
   standard spheres $S^{n_1}\times \cdots\times S^{n_q}$ where
    $S^k=\{ (x_1,\cdots, x_{k+1})\in \R^{k+1}\,|\, x^2_1 +\cdots +x^2_{k+1} =1\}$ for any 
    $k\in \mathbb{N}$.
  Let $S^k$ be equipped with the induced Riemannian metric from $\R^{k+1}$. 
   Then it is easy to check that $S^{n_1}\times \cdots\times S^{n_q}$ is a 
  nonnegatively curved Riemannian manifold with respect
  to the product of the Riemannian metrics on $S^{n_1},\cdots, S^{n_q}$.\nn
    
   $(\textbf{b}) \Rightarrow (\textbf{a})$\
    Recall the definition of $\pi_{\lambda}: M(P,\lambda) = \R\mathcal{Z}_P \rightarrow P$
     in~\eqref{Equ:glue-back}. For any proper face $f$ of $P$,
       let $M_f=\pi^{-1}_{\lambda}(f)$. It is easy to see the following.\n
       \begin{itemize}
         \item $M_f$ is an embedded
          closed submanifold of $\R\mathcal{Z}_P$ which has $2^{m+\dim(f)-n-m_f}$
                connected components, where $m_f$ is the number of facets of $f$.\n
         \item Each connected component of $M_f$ is diffeomorphic to $\R\mathcal{Z}_{f}$.   \n 
       \end{itemize}    
      
      Note that $M_f$ is the fixed point set of a rank $n-\dim(f)$ subgroup of $(\Z_2)^m$
      under the canonical action of $(\Z_2)^m$ on $\R\mathcal{Z}_P$. 
      Then since the Riemannian metric is $(\Z_2)^m$-invariant, each component of 
      $M_f$ is a totally geodesic
       submanifold of $\R\mathcal{Z}_P$ (see~\cite[Theorem 5.1]{Kobayashi72}), and so
       is non-negatively curved with respect to 
       the induced Riemannian metric from $\R\mathcal{Z}_P$.
      This implies that the condition (\textbf{b}) holds for $\R\mathcal{Z}_{f}$ as well.\n
      
      In particular when $\dim(f)=2$, the $\R\mathcal{Z}_{f}$ is a closed connected surface with
      non-negatively curved Riemannian metric. Then by Gauss-Bonnet Theorem,
       the Eular characteristic
       $\chi(\R\mathcal{Z}_{f})\geq 0$, which implies that $f$ has to be a 
      $3$-gon or a $4$-gon. Then by Theorem~\ref{thm:Wiem}(\textbf{b}), the polytope 
         $P$ is combinatorially equivalent
         to a product of simplices. \nn
    
   $(\textbf{a}) \Rightarrow (\textbf{c})$ \
   Suppose $P$ is combinatorially equivalent to 
   $\Delta^{n_1}\times\cdots\times\Delta^{n_q}$ where
   $n_1+\cdots+n_q =n$. Consider the standard simplex
   $\Delta^k$ as a metric subspace of $\R^{k+1}$ with the intrinsic metric. 
   Let $P'=\Delta^{n_1}\times\cdots\times\Delta^{n_q}$ be the product of the $q$ metric spaces
       $\Delta^{n_1},\cdots, \Delta^{n_q}$.
   For each $1\leq i \leq q$,
     let $\{ v^i_0,\cdots, v^i_{n_i} \}$ be the set of vertices of 
     $\Delta^{n_i}$. 
    Then all the facets of $P'$ are (see~\cite{SuyMasDong10})
    \[  \{ F^i_{k_i} = \Delta^{n_1}\times \cdots \times \Delta^{n_{i-1}} \times f^{i}_{k_i}
   \times \Delta^{n_{i+1}} \times \cdots \times \Delta^{n_q} \, | \ 0 \leq k_i \leq n_i,
        \ i=1,\cdots, q  \}  
    \]   
   where 
   $f^{i}_{k_i}$ is the codimension-one face of the simplex $\Delta^{n_i}$
   which is opposite to the vertex $v^i_{k_i}$. 
  The total number of facets of $P'$ is $m=n+q$.
    \n
      
    \noindent  \textbf{Claim:} As a metric 
      space $(\R\mathcal{Z}_{P'},d_{P'})$ is isometric to the 
        product of the $q$ metric spaces 
       $(\R\mathcal{Z}_{\Delta^{n_1}},d_{\Delta^{n_1}}),\cdots, 
        (\R\mathcal{Z}_{\Delta^{n_q}},d_{\Delta^{n_q}})$.\n
        
      Indeed if we glue two copies of $P'$ along the facet 
      $F^i_{k_i}$,
     we obtain 
     $$\Delta^{n_1}\times \cdots \times \Delta^{n_{i-1}} \times 
      \big(\Delta^{n_i}\cup_{f^{i}_{k_i}} \Delta^{n_i}\big)
   \times \Delta^{n_{i+1}} \times \cdots \times \Delta^{n_q}.$$
    We can decompose 
   the gluing procedure in the construction~\eqref{Equ:glue-back} for 
   $\R\mathcal{Z}_{P'}$ into $q$ steps. The $i$-th step
   only glues those facets of the form $\{ F^i_{k_i}, 0\leq k_i \leq n_i \}$
    in the $2^m$ copies of $P'$, which 
   gives us the factor
    $(\R\mathcal{Z}_{\Delta^{n_i}},d_{\Delta^{n_i}})$,
    while fixing all other factors in the product.  
  So after the first step we obtain $2^{m-n_1-1}$ copies of
  $\R\mathcal{Z}_{\Delta^{n_1}}\times\Delta^{n_2}\times\cdots\times \Delta^{n_q}$.
   After the second step we obtain $2^{m-n_1-n_2-2}$ copies of 
    $\R\mathcal{Z}_{\Delta^{n_1}}\times \R\mathcal{Z}_{\Delta^{n_2}} 
    \times\Delta^{n_3}\times\cdots\times \Delta^{n_q}$ and so on. Then our claim follows.\n

        Moreover, observe that for any $k\in \mathbb{N}$,
        $(\R\mathcal{Z}_{\Delta^{k}},d_{\Delta^{k}})$ is isometric to
         the boundary of the $(k+1)$-dimensional cross-polytope $Q^{k+1}$ whose vertices are 
         $$\{ (0,\cdots, 0, \overset{i}{1}, 0,\cdots, 0),\,
         (0,\cdots, 0, \overset{i}{-1}, 0,\cdots, 0) \,; \, i=1,\cdots, k+1\}.$$
        Recall that the $n$-dimensional \emph{cross-polytope} is the simplicial polytope dual to
        the $n$-dimensional cube (see Figure~\ref{p:Octahedron} for the case $n=2,3$).\n
        
         \begin{figure}
        \includegraphics[height=2.9cm]{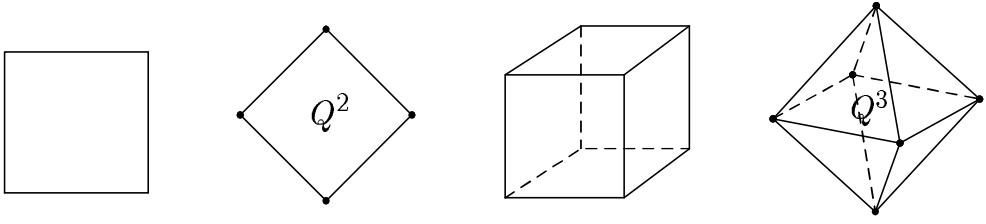}\\
          \caption{Cross-polytopes of dimension $2$ and $3$}\label{p:Octahedron}
      \end{figure}

         It is well known that the intrinsic metric on
         any \emph{convex hypersurface} (i.e. the boundary of a compact 
         convex set with nonempty interior) in a Euclidean space $\R^n$ ($n\geq 3$)
         is non-negatively curved (see~\cite[p.359]{Burago2001}).
         Then since $Q^{k+1}$ is a convex polytope in $\R^{k+1}$, 
          $(\R\mathcal{Z}_{\Delta^{k}},d_{\Delta^{k}})$ is non-negatively curved for any
           $k\geq 2$. When $k=1$, the boundary of $Q^2$ is a 
           piecewise smooth 
           simple curve in $\R^2$. But by definition (see~\cite[Definition 4.1.9]{Burago2001}),
            the intrinsic metric on any piecewise smooth simple curve
           is non-negatively curved because any geodesic triangle on the curve is degenerate. 
           So we can conclude that
          $(\R\mathcal{Z}_{P'},d_{P'})$ is non-negatively curved because 
          the product of non-negatively curved Alexandrov spaces is still
          non-negatively curved (see~\cite[Chapter 10]{Burago2001}).
      \nn

    \noindent $(\textbf{c}) \Rightarrow (\textbf{d})$ \
       If the metric $d_{P'}$ on $\R\mathcal{Z}_{P'}$ 
         is non-negatively curved, we want to show that the dihedral angle between
         any two adjacent facets $F_1$ and $F_2$ of $P'$ is non-obtuse.
         Otherwise, assume that the dihedral angle $\theta$ between $F_1$ and $F_2$ is obtuse.
        Choose a point $O$ in the relative interior of $F_1\cap F_2$, a point $A\in F_1$ and
        $B\in F_2$ so that the line segments $\overline{OA}$ and $\overline{OB}$
         are perpendicular to $F_1\cap F_2$.
        Then $\angle AOB =\theta$. Suppose the lengths of the line segaments $\overline{OA}$,
        $\overline{OB}$ and $\overline{AB}$ are
        $$|\overline{OA}|=|\overline{OB}|=a,\ \ |\overline{AB}|=b.$$ 
            In the gluing construction~\eqref{Equ:glue-back} 
        for $\R\mathcal{Z}_{P'}$, consider two copies of $P'$ glued along the facet $F_1$. We
        then have an isosceles triangle $\triangle AB_1B_2$ in $\R\mathcal{Z}_{P'}$ 
        (see Figure~\ref{p:Comparison}).
        When $a$ is small enough, 
         the distance between $B_1$ and $B_2$ in $(\R\mathcal{Z}_{P'},d_{P'})$ is
         $2a$ by the definition of the quotient metric
        because $\overline{B_1 O}\cup\overline{OB_2}$ is the shortest path 
         between $B_1$ and $B_2$ in $(\R\mathcal{Z}_{P'},d_{P'})$.
         Moreover, let  $\triangle \bar{A}\bar{B}_1\bar{B}_2$ be a
         triangle in the Euclidean plane $\R^2$ which have the same lengths of sides as
         $\triangle AB_1B_2$. Then since $\theta$ is obtuse, it is clear that
         $\triangle AB_1B_2$ is strictly \emph{thinner} than
         $\triangle \bar{A}\bar{B}_1\bar{B}_2$, i.e.
         $$\angle AB_1B_2 < \angle \bar{A}\bar{B}_1\bar{B}_2, \ \
          \angle AB_2B_1 < \angle \bar{A}\bar{B}_2\bar{B}_1,  \ \
          \angle B_1A B_2 < \angle \bar{B}_1\bar{A}\bar{B}_2.$$
          But this contradicts
        our assumption that the metric $d_{P'}$ on $\R\mathcal{Z}_{P'}$ 
         is non-negatively curved (see~\cite[Section 4.1.5]{Burago2001}).
         Therefore, $\theta$ has to be non-obtuse. \nn
         
          \begin{figure}
        \includegraphics[height=4.7cm]{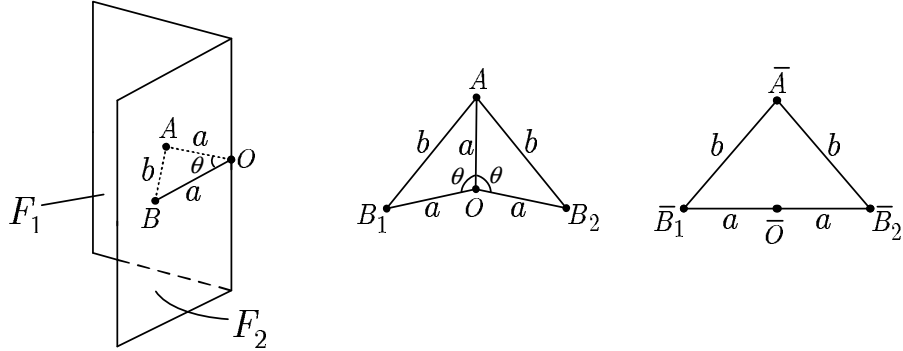}\\
          \caption{Comparison of triangles}\label{p:Comparison}
      \end{figure}

   \noindent $(\textbf{d}) \Rightarrow (\textbf{a})$ \
        Suppose $F_1, F_2$ and $F_3$ are
         three facets of $P'$ with $F_1\cap F_2\cap F_3 \neq
         \varnothing$. Then $F_1\cap F_2$ and $F_1\cap F_3$ are
        codimension-one faces of $F_1$. By our assumption, the dihedral angles of 
        $(F_1,F_2)$, $(F_1,F_3)$ and $(F_2,F_3)$ are all
          non-obtuse. We claim that the dihedral angle between
          $F_1\cap F_2$ and $F_1\cap F_3$ in $F_1$ is non-obtuse as well.\nn

         Indeed, we can assume that $P'$ sits inside $\R^n$
         and let $\eta_i\in \R^n$ ($i=1,2,3$) be a normal vector of
          $F_i$ pointing to the interior of $P$ (see Figure~\ref{p:Cone}).
          By choosing a proper coordinate system of $\R^n$, we can
         assume that $\eta_1 = (0,\cdots, 0 , 1) \in \R^n$ and $F_1$ lies in
         the coordinate hyperplane $\{ x_n=0 \} \subset \R^n$.
         Let $\eta_2=(a_1,\cdots, a_{n-1},a_n)$, $\eta_3=(b_1,\cdots,
         b_{n-1},b_n)$. Since the dihedral angles of $(F_1,F_2)$, $(F_1,F_3)$ and $(F_2,F_3)$ 
         are all non-obtuse, the inner products of 
          $\eta_1,\eta_2, \eta_3$ satisfy
           $$ \eta_1\cdot \eta_2 = a_n\leq 0,\ \ \eta_1\cdot \eta_3 = b_n \leq 0,\ \
         (\eta_2,\eta_3) = a_1b_1 +\cdots + a_{n-1}b_{n-1} + a_n b_n \leq
          0.$$
          \begin{equation} \label{Equ:Inner-Product}
            \Longrightarrow\ \ a_1b_1 +\cdots + a_{n-1}b_{n-1} \leq 0.
          \end{equation}
         Note that $(a_1,\cdots, a_{n-1},0)$ and $(b_1,\cdots, b_{n-1},0)$
         are normal vectors of $F_1\cap F_2$ and $F_1\cap F_3$ inside
         $F_1$ respectively. So~\eqref{Equ:Inner-Product} implies
         that the dihedral angle between $F_1\cap F_2$ and $F_1\cap F_3$ in $F_1$
         is non-obtuse. Our claim is proved.\n
         
       \begin{figure}
        \includegraphics[height=3.7cm]{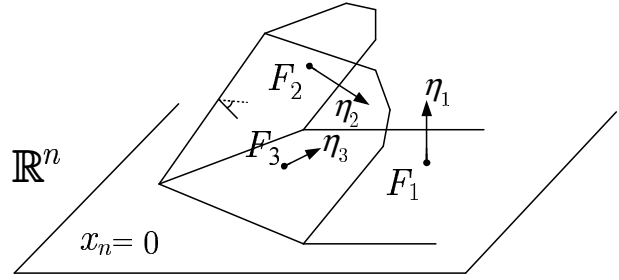}\\
          \caption{Dihedral angles of a simple convex polytope}\label{p:Cone}
      \end{figure}

         By iterating the above arguments, we can show that for any
         $2$-dimensional face $f$ of $P'$,  any interior angle of $f$ is non-obtuse.
         Since $f$ is a Euclidean polygon, it must be either a
         $3$-gon or a $4$-gon. So since $P$ is combinatorially equivalent to $P'$,
         any $2$-face of $P$ is either a $3$-gon or a $4$-gon, too.
         Then by Theorem~\ref{thm:Wiem}(\textbf{b}), the polytope 
         $P$ is combinatorially equivalent
         to a product of simplices.
  \end{proof}
  \n
  
  \begin{rem}
   In the statement of Theorem~\ref{thm:Polytope-product-1}(\textbf{b}),
   if we do not require the 
    Riemannian metric on $\R\mathcal{Z}_P$ to be $(\Z_2)^m$-invariant,
   it is still likely that $P$ has to be combinatorially equivalent to
    a product of simplices (see~\cite[Section 5.2]{KurMasYu15}). 
   But we do not know how to prove this so far. 
  \end{rem}
        \vskip .6cm
    
    \section{Appendix}
   Here we give another proof of Theorem~\ref{thm:Wiem}(\textbf{b}).
    For brevity, we say that a simplicial complex is a \emph{sphere join} if 
    it is isomorphic to $\partial \Delta^{n_1} * \cdots * \partial \Delta^{n_q}$ for some 
   $n_1\cdots, n_q \in \mathbb{N}$.
  One dimensional sphere join is either $\partial \Delta^2$ (boundary of a triangle) or 
  $\partial \Delta^1*\partial \Delta^1$ (boundary of a square). Let us first prove the following
  theorem. 

\begin{thm} \label{theo:1}
  Let $K$ be a simplicial complex of dimension $n$.  Suppose that $K$ satisfies the following 
  two conditions:
\begin{enumerate}
\item[(a)] $K$ is a pseudomanifold,
\item[(b)] the link of any vertex of $K$ is a sphere join of dimension $n-1$,
\end{enumerate} 
Then $K$ is a sphere join. 
\end{thm}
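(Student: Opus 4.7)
The plan is to prove Theorem~\ref{theo:1} by induction on $n=\dim K$. In the base case $n=2$, condition (b) forces the link of every vertex of $K$ to be either the triangle $\partial\Delta^2$ or the square $\partial\Delta^1*\partial\Delta^1$, so $K$ is a triangulated closed surface in which every vertex has degree $3$ or $4$. Letting $V_3,V_4$ count the vertices of each degree and using the standard relations $2E=3V_3+4V_4$ and $3F=2E$, one computes $\chi(K)=V_3/2+V_4/3$. Since $V_3+V_4>0$ we get $\chi(K)>0$, and because $K$ is a closed surface this leaves only $\chi(K)\in\{1,2\}$; the case $\chi(K)=1$ would force $V\leq 3$ together with too many edges to fit a simplicial complex, so $\chi(K)=2$. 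The admissible pairs are then $(V_3,V_4)\in\{(4,0),(2,3),(0,6)\}$, which correspond exactly to $\partial\Delta^3$, $\partial\Delta^1*\partial\Delta^2$, and $\partial\Delta^1*\partial\Delta^1*\partial\Delta^1$.

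For the inductive step $n\ge 3$, I plan to use Proposition~\ref{Prop:Polytope-product-2} (the equivalence of \textbf{(a)} and \textbf{(c)}) to reduce the problem to showing that the vertex sets of the minimal missing faces of $K$ form a partition of $V(K)$. This amounts to two claims: (A) every vertex of $K$ lies in some minimal missing face; (B) any two minimal missing faces sharing a vertex coincide. For (A), fix a vertex $v$. If $v$ has a non-neighbor $w$ in $K$, then $\{v,w\}$ is already a minimal missing face containing $v$. Otherwise $V(\link_K v)=V(K)\setminus\{v\}$ and $\link_K v=\partial\Delta^{m_1}*\cdots*\partial\Delta^{m_q}$ with $\sum m_i=n$; I would argue that some factor vertex set $\tau_i=V(\Delta^{m_i})$ must already be a simplex of $K$, whereupon $\tau_i\cup\{v\}$ is a minimal missing face of $K$ through $v$. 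The alternative, that every $\tau_i$ is missing in $K$, is excluded by (a): any maximal $n$-simplex $\sigma$ with $v\notin\sigma$ would then satisfy $|\sigma\cap\tau_i|\le m_i$ and so $|\sigma|\le\sum m_i=n<n+1$, forcing every maximal simplex to contain $v$; a maximal face of $\link_K v$ would then be an $(n-1)$-simplex of $K$ lying in only one $n$-simplex, violating the pseudomanifold condition.

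For (B), let $\omega,\omega'$ be minimal missing faces with $a\in\omega\cap\omega'$. When both have cardinality at least $3$, the sets $\omega\setminus\{a\}$ and $\omega'\setminus\{a\}$ are minimal missing faces of the sphere join $\link_K a$; since minimal missing faces of a sphere join partition its vertex set (by Proposition~\ref{Prop:Polytope-product-2} applied to the link), they are either equal, giving $\omega=\omega'$, or disjoint, so that $\omega\cap\omega'=\{a\}$. The main obstacle is the residual configurations: singleton intersection $\omega\cap\omega'=\{a\}$ in the large-cardinality case, and the cases in which one of $|\omega|,|\omega'|$ equals $2$, where the link argument does not immediately force equality. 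My plan here is to transport the data to the link of a suitable vertex $b\in\omega\setminus\{a\}$: in $\link_K b$ one of the two minimal missing faces shrinks while the other persists, and iterating this reduction should either fall back into the already-handled large-intersection case or, via the pseudomanifold condition together with the inductively understood structure of the links, yield an $(n-1)$-simplex meeting only one $n$-simplex --- a contradiction. Combining (A) and (B) then gives the desired partition of $V(K)$, and Proposition~\ref{Prop:Polytope-product-2} identifies $K$ as a sphere join.
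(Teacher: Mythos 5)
Your approach is genuinely different from the paper's. The paper's proof of Theorem~\ref{theo:1} is not an induction: it fixes a vertex $w$, a maximal simplex $[I]$ of $\link_K w$, locates via the pseudomanifold condition the unique ``opposite'' vertex $v$ with $[I,v]\cap[I,w]=[I]$, and splits into cases depending on whether $v\in\link_K w$; the key technical tool is that the link of an $(n-2)$-simplex is a $1$-dimensional sphere join and hence has at most four vertices. Your plan instead reduces to showing that the minimal missing faces of $K$ partition $V(K)$, with a separate Euler-characteristic computation for $n=2$. Two remarks on the framing. First, invoking Proposition~\ref{Prop:Polytope-product-2}(\textbf{a})$\Leftrightarrow$(\textbf{c}) is slightly out of scope since there $K$ is assumed polytopal, but the implication you actually need --- that a partition by minimal missing faces forces $K\cong\partial\Delta^{k_1-1}*\cdots*\partial\Delta^{k_r-1}$ --- holds for any simplicial complex and is easy, so this is harmless once noted. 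Second, the ``induction on $n$'' does no real work in your sketch: the fact that $\link_K v$ is a sphere join is handed to you by hypothesis (b), not by the inductive hypothesis, so the inductive hypothesis is never actually invoked in step (A) or in the non-residual part of (B).

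Your base case $n=2$ is correct ($2E=3V_3+4V_4$, $3F=2E$, $\chi=V_3/2+V_4/3>0$, ruling out $\chi=1$ by a vertex count, then classifying $(V_3,V_4)\in\{(4,0),(2,3),(0,6)\}$), and your argument for claim (A) is also correct: if every $\tau_i$ were missing from $K$ then every maximal $n$-simplex would contain $v$, so a maximal $(n-1)$-simplex of $\link_K v$ would lie in only one $n$-simplex of $K$, contradicting the pseudomanifold condition. This uses purity and condition (a)(i) exactly as they should be used.

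The genuine gap is in claim (B), and you have correctly identified it yourself. The link-of-$a$ argument only rules out $1<|\omega\cap\omega'|<\min(|\omega|,|\omega'|)$; it leaves open the case $\omega\cap\omega'=\{a\}$ with both sets large, as well as the case where one of them is an edge (a non-adjacent pair). Your proposed remedy --- pass to $\link_K b$ for $b\in\omega\setminus\{a\}$ and iterate --- does not obviously terminate. Concretely, with $\tau_1=\omega\setminus\{a\}$ and $\tau_2=\omega'\setminus\{a\}$, if $\tau_2\cup\{b\}\in K$ one does get a contradiction in $\link_K b$ (the two minimal missing faces $\omega\setminus\{b\}$ and $\omega'$ share $a$ but cannot coincide), but if $\tau_2\cup\{b\}\notin K$ one only produces a new minimal missing face $\omega''\subseteq\tau_2\cup\{b\}$ containing $b$, with $\omega''\cap\omega=\{b\}$ and with $|\omega''|$ possibly equal to $|\omega'|$, so the configuration merely relocates without shrinking. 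The case $|\omega|=2$, i.e.\ $v$ having two non-neighbours, is similarly untreated. Filling this in is where the real content of the theorem lies, and it seems to require an argument in the spirit of the paper's Case 1/Case 2 analysis using condition (a) more forcefully (e.g.\ the unique opposite vertex across a facet, and the four-vertex bound on links of $(n-2)$-faces), rather than the vertex-link partition property alone.
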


Recall that $K$ is an $n$-dimensional \emph{pseudomanifold} if the following conditions hold:
\begin{itemize}
 \item[(i)] Every $(n-1)$-simplex of $K$ is a face of exactly two $n$-simplices for $n>1$.
 \item[(ii)] For every pair of $n$-simplices $\sigma$ and $\sigma'$ in $K$, 
        there exists
         a sequence of $n$-simplices $\sigma =\sigma_0,\sigma_1,\dots, \sigma_k =\sigma'$
        such that the intersection $\sigma_i\cap \sigma_{i+1}$ is an $(n-1)$-simplex for all $i$.
\end{itemize}
  The Condition (ii) means that $K$ is a \emph{strongly connected} simplicial complex.\nn
  
\begin{proof}
First of all, assumption (b) implies that the link of any $k$-simplex in $K$ is a sphere join of dimension $n-k-1$.
Let $w$ be a vertex of $K$.  By assumption (b), the link $\link_K w$ is of the form
$\link_Kw=\partial\Delta^{n_1}*\dots*\partial\Delta^{n_q}$
where $n_1+\dots+n_q=n$.  Denote the vertices of $\partial\Delta^{n_k}$ by $v_0^k,v_1^k,\dots,v_{n_k}^k$ for $k=1,2,\dots,q$, so that 
\begin{equation} \label{eq:1}
\link_Kw=\partial[v_0^1,v_1^1,\dots, v_{n_1}^1]*\dots*\partial[v_0^q,v_1^q,\dots, v_{n_q}^q].
\end{equation}
Let $I$ be the set of vertices $v_1^1,\dots, v_{n_1}^1,\dots, v_1^q,\dots, v_{n_q}^q$.  Then $[I]$ is a maximal simplex in $\link_Kw$ and the simplex $[I,w]$ spanned by $I$ and $w$ is of dimension $n$.  Since $K$ is a pseudomanifold by assumption (a), there is a unique vertex $v$ in $K$ such that $[I,v]\cap [I,w]=[I]$. We have two cases below. 

\smallskip
Case 1. The case where  $v\notin \link_Kw$. In this case we claim $K=\partial[v,w]*\link_Kw$.  The proof is as follows.  Choose an element from $I$ arbitrarily, say $v^i_j$ $(1\le i\le q$, $1\le j\le n_i)$.  Set $\bar I=(I\backslash\{v^i_j\})\cup\{v^i_0\}$.  Then $[\bar I]$ is an $(n-1)$-simplex of $\link_Kw$ by \eqref{eq:1}, so there is a unique vertex $\bar v$ of $K$ such $[\bar I,\bar v]\cap [\bar I,w]=[\bar I]$ as before since $K$ is a pseudomanifold.  Now we shall observe the link of an $(n-2)$-simplex $[I\cap \bar I]=[I\backslash\{v^i_j\}]$ in $K$.  By our construction, the following are four $n$-simplices in $K$ containing $[I\cap\bar I]$:
\[
[I\cap \bar I,v^i_j,w],\ [I\cap \bar I,v^i_0,w],\ [I\cap \bar I,v^i_j,v],\ 
     [I\cap \bar I,v^i_0,\bar v].
\]
Therefore the vertices $v^i_j,w,v^i_0,v,\bar v$ are in the link of the $(n-2)$-simplex $[I\cap \bar I]$. But by assumption (b), this link is a sphere join of dimension one which can
have at most four vertices. 
Note that $v^i_j,w,v^i_0$ are mutually distinct and $v,\bar v$ are different from $v^i_j,w,v^i_0$. So we must have $\bar v=v$.  
Now let $v^i_j$ run over all elements of $I$, then $\bar I$ runs over all $(n-1)$-simplices in 
$\link_Kw$ that share a $(n-2)$-simplex with $I$. Moreover by~\eqref{eq:1},
 $\link_Kw$ is a strongly connected simplicial complex. So applying
 our argument to $[I]$ and all other $(n-1)$-simplices in $K$, we can 
show that $\partial[v,w]*\link_Kw$ is a subcomplex of $K$.  However, $\partial[v,w]*\link_Kw$ and $K$ are both pseudomanifolds and have the same dimension, so they must agree.  This proves the claim. 
\n

Case 2.  The case where $v\in\link_Kw$, so $v$ is one of $v_0^1,v_0^2,\dots,v_0^q$.  We may assume $v=v_0^1$ without loss of generality.  Then 
\begin{equation} \label{eq:2}
[v,I]=[v_0^1,v_1^1,\dots,v_{n_1}^1,v_1^2,\dots,v_{n_2}^2,\dots,v_1^q,\dots,v_{n_q}^q] \quad\text{is an $n$-simplex in $K$}. 
\end{equation}
We look at $\link_Kv$.  Since $v=v_0^1$, it follows from \eqref{eq:1} that $\link_Kv$ contains 
\begin{equation} \label{eq:3}
\partial[v_1^1,\dots, v_{n_1}^1]*\partial[v_0^2,v_1^2,\dots,v_{n_2}^2]*\dots*\partial[v_0^q,v_1^q,\dots, v_{n_q}^q]
\end{equation}
as a subcomplex.  This together with assumption (b) implies that there is a vertex $w'$ different from vertices in \eqref{eq:3} such that $\link_Kv$ is one of the following:
\[
\begin{split}
&\partial[w',v_1^1,\dots, v_{n_1}^1]*\partial[v_0^2,v_1^2,\dots,v_{n_2}^2]*\dots*\partial[v_0^q,v_1^q,\dots, v_{n_q}^q],\\
&\partial[v_1^1,\dots, v_{n_1}^1]*\partial[w',v_0^2,v_1^2,\dots,v_{n_2}^2]*\dots*\partial[v_0^q,v_1^q,\dots, v_{n_q}^q],\\
&\qquad\qquad\vdots \qquad\qquad \qquad \vdots \\
&\partial[v_1^1,\dots, v_{n_1}^1]*\partial[v_0^2,v_1^2,\dots,v_{n_2}^2]*\dots*\partial[w',v_0^q,v_1^q,\dots, v_{n_q}^q].
\end{split}
\]
However, the fact~\eqref{eq:2} implies that none of the above occurs except the first one.   
So we have
\begin{equation} \label{eq:4}
\link_Kv=\partial[w',v_1^1,\dots, v_{n_1}^1]*\partial[v_0^2,v_1^2,\dots,v_{n_2}^2]*\dots*\partial[v_0^q,v_1^q,\dots, v_{n_q}^q].
\end{equation}

The simplex $[I]$ is in $\link_Kv$ by \eqref{eq:2} and the $n$-simplices $[I,v]$ and $[I,w]$ share $[I]$. Note that $w\in \link_K v$ in this case but $w\neq v^i_j$ for all $i$ and $j$. So 
 from \eqref{eq:4} we can conclude $w=w'$. Then
\begin{equation} \label{eq:5}
\link_Kv=\partial[w,v_1^1,\dots, v_{n_1}^1]*\partial[v_0^2,v_1^2,\dots,v_{n_2}^2]*\dots*\partial[v_0^q,v_1^q,\dots, v_{n_q}^q].
\end{equation}

Remember that $v=v_0^1$.  We claim that $K$ contains 
\begin{equation} \label{eq:6}
\partial[w,v_0^1,v_1^1,\dots, v_{n_1}^1]*\partial[v_0^2,v_1^2,\dots,v_{n_2}^2]*\dots*\partial[v_0^q,v_1^q,\dots, v_{n_q}^q]
\end{equation}
as a subcomplex.  Indeed, any $n$-simplex in \eqref{eq:6} is spanned by $n+1$ vertices which consist of $n_1+1$ vertices from  $\partial[w,v_0^1,v_1^1,\dots, v_{n_1}^1]$ and $n_i$ vertices from $\partial[v_0^i,v_1^i,\dots,v_{n_i}^i]$ for $i=2,3,\dots,q$. Since $v_0^1=v$, either $w$ or $v$ is in the $n_1+1$ vertices from $\partial[w,v_0^1,v_1^1,\dots, v_{n_1}^1]$.  If $w$ (resp. $v$) is in the $n_1+1$ vertices from $\partial[w,v_0^1,v_1^1,\dots, v_{n_1}^1]$, then any $n$-simplex formed this way is in $K$ by \eqref{eq:5} (resp. \eqref{eq:1}).  This proves the claim.  
\n
Finally, since $K$ and the subcomplex \eqref{eq:6} are both pseudomanifolds and have the same dimension, they must agree. So we finish the proof of the theorem.  
\end{proof}

\noindent \textbf{\textit{Proof of Theorem~\ref{thm:Wiem}(\textbf{b})}:}
 Suppose any $2$-dimensional face of $P$ is either a $3$-gon or a $4$-gon. We want to show that
 $P$ is combinatorially equivalent to a product of simplices, or equivalently $\partial P^*$
 is a sphere join.
Let us do induction on the dimension of $P$. When $\dim P=2$, the proof is trivial.
If $\dim P\ge 3$, we will show that $\partial P^*$ satisfies the two conditions in 
Theorem~\ref{theo:1}.  Condition (a) is obvious. By induction assumption, all facets of $P$ are product of simplices which means that $\partial P^*$ satisfies condition (b). 
So we finish the induction by Theorem~\ref{theo:1}. 
\qed
\nn

\begin{rem}
     Theorem~\ref{thm:Wiem}(\textbf{b}) is equivalent to saying that
     if all the facets of $P^n$ are products of simplices, then so is $P^n$.
     In fact this statement has already appeared in an 
     old paper~\cite[Lemma 2.7]{Cox34} where a product of  
     simplices is called a ``simplicial prism''. But the proof of this statement in~\cite{Cox34}
     is a bit vague in the final step. In addition, 
      Theorem~\ref{thm:Polytope-product-1}(\textbf{d})
      is also stated in~\cite[Lemma 2.8]{Cox34}.
  \end{rem}
   \nn

      \section*{Acknowledgement}
      The authors want to thank Hanchul Park and Suyoung Choi for some helpful comments
      and thank Shicheng Xu and Jiaqiang Mei for
    some valuable discussions on the geometry of Alexandrov spaces. 
        \\

\end{document}